\documentclass[11pt]{article}

\usepackage{graphicx, amsmath,amsthm,amssymb}
\usepackage[breaklinks=true]{hyperref}
\usepackage[labelfont=bf]{subcaption}
\usepackage[labelfont=bf]{caption}
\usepackage{algorithm,algcompatible,mathtools}

\usepackage[multidot]{grffile}
\usepackage[normalem]{ulem} 

\usepackage{fullpage,etoolbox}
\usepackage{color}
\usepackage[numbers,sort,compress]{natbib}

\newtheorem{theorem}{Theorem}
\numberwithin{theorem}{section}

\newtheorem{proposition}[theorem]{Proposition}
 
\newtheorem{remark}{Remark}

\numberwithin{equation}{section}

\newtoggle{draft}
\togglefalse{draft}

\newcommand{\tf}[1]{\boldsymbol{#1}}

\newcommand{\Phixh}{\hat{\tf \Phi}_x}
\newcommand{\Phiuh}{\hat{\tf \Phi}_u}
\newcommand{\Phix}{\tf\Phi_x}
\newcommand{\Phiu}{\tf\Phi_u}
\newcommand{\Phih}{\hat{\tf \Phi}}
\newcommand{\Dh}{\hat{\tf{\Delta}}}
\newcommand{\D}{{\tf{\Delta}}}
\newcommand{\DA}{\D_{\A}}
\newcommand{\DB}{\D_{\B}}

\newcommand{\A}{\tf A}
\newcommand{\B}{\tf B}
\newcommand{\K}{\tf K}
\newcommand{\Q}{\tf Q}
\newcommand{\x}{\tf x}
\newcommand{\xhat}{\tf{\hat x}}
\newcommand{\uu}{\tf u}
\newcommand{\w}{\tf w}
\newcommand{\what}{\hat{\tf w}}
\newcommand{\eps}{\varepsilon}

\newcommand{\R}{\ensuremath{\mathbb{R}}}

\newcommand{\linfnorm}[1]{\left\|#1\right\|_{\ell_\infty}}
\newcommand{\LTV}{\mathcal{L}_{\mathrm{TV}}}
\newcommand{\LTI}{\mathcal{L}_{\mathrm{TI}}}
\newcommand{\linffnorm}[1]{\left\|#1\right\|_{\ell_\infty\to\ell_\infty}}
\newcommand{\Sp}{\tf S_+}
\newcommand{\Sm}{\tf S_-}
\newcommand{\RHinf}{\mathcal{RH}_\infty}

\title{Robust Performance Guarantees for System Level Synthesis}
\author{Nikolai Matni \vspace{0.0625in}
\\
Electrical and Systems Engineering \\
University of Pennsylvania  \and Anish A. Sarma\vspace{0.0625in}
\\
Computation and Neural Systems \\
California Institute of Technology 
}

\date{\today}

\begin{document}
\maketitle
\begin{abstract}
We generalize the System Level Synthesis (SLS) framework to systems defined by bounded causal linear operators, and use this parameterization to make connections between robust SLS and classical results from the robust control literature.  In particular, by leveraging results from $\mathcal{L}_1$ robust control, we show that necessary and sufficient conditions for robust performance with respect to causal bounded linear uncertainty in the system dynamics can be translated into convex constraints on the system responses.  We exploit this connection to show that these conditions naturally allow for the incorporation of delay, sparsity, and locality constraints on the system responses and resulting controller implementation, allowing these methods to be applied to large-scale distributed control problems -- to the best of our knowledge, these are the first such robust performance guarantees for distributed control systems.  
\end{abstract}

\section{Introduction}
\label{sec:introduction}
Robust control seeks to explicitly account for the unavoidable mismatch between the predictions made by a mathematical model of a system and the behavior of the system itself.  In the context of linear control systems, robust control techniques \cite{khammash1990stability,dahleh1994control,zhou1996robust} have proven invaluable in applications ranging from process control to aerospace engineering.  A challenge in robust control is the tension between how detailed a description of model uncertainty is available, and the conservativeness of corresponding computationally tractable analysis and synthesis tools.  Indeed, many of the celebrated tools from robust control, such as the structured singular value (see \cite{packard1993complex}), structured small gain theorems (see Ch 7.2 of \cite{dahleh1994control}), and integral quadratic constraints (IQCs) (see \cite{megretski1997system}), seek to optimally navigate this tension.  However, as of yet, few of these results have been extended in a provably non-conservative manner to the large-scale distributed setting.

Although there is a rich and increasingly mature body of work tackling the distributed optimal control of linear systems (see \cite{2006_Rotkowitz_QI_TAC, 2012_Mahajan_Info_survey, wang2019system,zheng2019equivalence}, and references therein), some of which have robust control interpretations with respect to unstructured norm bounded uncertainty (e.g., \cite{langbort2004distributed,matni2014distributed,lessard2014state,rosinger2017structured,ahmadi2018distributed}), to the best of our knowledge no necessary and sufficient conditions for the robust performance of large-scale distributed controllers exist.  Another branch of related work are methods that use dissipativity theory combined with distributed optimization techniques to derive sufficient conditions for the stability of known interconnected systems, see for example \cite{arcak2016networks,meissen2015compositional,anderson2011dynamical} and the references therein -- while applicable to large-scale systems, these methods can often lead to conservative bounds.

In this paper, we address this gap by leveraging the System Level Synthesis (SLS)  framework \cite{anderson2019system,wang2019system}.  SLS reformulates robust and optimal control problems as an optimization over the achievable closed loop behavior, or \emph{system responses}, of a linear-time-invariant (LTI) dynamical system, and in particular shows that it is necessary and sufficient to constrain these system responses to lie in an affine subspace defined by the dynamics.  This parameterization has been successfully exploited in the context of the distributed optimal control of finite-dimensional LTI systems to scale controller synthesis and implementation techniques to systems of arbitrary size under practically realistic assumptions on the underlying system \cite{wang2014localized, wang2016localized,wang2018separable}.  In order to accommodate general linear time varying uncertainty, we extend the SLS parameterization of internally stabilizing controllers to a class of systems described by causal bounded linear operators, and show how this parameterization can be used to make connections to classical robust synthesis techniques \cite{khammash1990stability,dahleh1994control}.  In doing so, we derive necessary and sufficient conditions for robust performance in terms of \emph{convex} constraints on the system response variables.  We then exploit this connection to show that these necessary and sufficient conditions are equally applicable when additional delay, sparsity, and locality constraints are imposed on the system responses and controller implementation.  To the best of our knowledge, these are the first such necessary and sufficient conditions that are applicable to large-scale uncertain systems and distributed controllers.  In particular, our contributions are:
\begin{itemize}
\item A generalization of the SLS parameterization of stabilizing state-feedback controllers for finite-dimensional LTI systems \cite{wang2019system} to systems with dynamics described by bounded and causal linear operators, wherein we show that constraining system responses to lie in an affine subspace defined by the system dynamics is necessary and sufficient for them to be achievable by a causal linear controller;
\item A generalization of the robustness result of \cite{matni2017scalable} that shows that the  generalized SLS parameterization is stable with respect to perturbations away from the aforementioned achievability subspace, as well as an explicit characterization of the effects of these perturbations on the actual closed loop behavior achieved by the correspondingly perturbed controller implementation;
\item The formulation and solution of a robust performance problem in terms of system response variables for a linear-time-invariant dynamical system subject to bounded and causal linear uncertainty that naturally allows for delay, sparsity, and locality structure to be imposed on the system response and corresponding controller implementation;
\end{itemize}

The rest of the paper is structured as follows: in Section 2, we introduce notation and basic definitions of stability and well-posedness.  In Section 3, we review the SLS parameterization for finite-dimensional LTI systems, and generalize it to a richer class of systems with dynamics described by bounded linear operators.  In Section 4, we consider a robust version of the generalized SLS parameterization derived in Section 3, and provide necessary and sufficient conditions for robust performance in terms of \emph{convex} constraints on the system responses.  In Section 4.1, we highlight how these convex constraints can naturally be integrated with structural constraints on the system responses, such as delay, sparsity, and locality constraints, while still preserving the necessity and sufficiency of the identified conditions.  We end with conclusions and discussions of future work in Section 5.

\section{Notation and Preliminaries}
\label{sec:notation}
We slightly adapt the notation used in \cite{khammash1990stability}.  We use Latin letters to denote vectors and matrices, e.g., $Ax = b$, and bold-face Latin letters to denote signals and operators, e.g., $\tf x = (x_t)_{t=0}^\infty$, and $\tf y = \tf G \tf u$.  We let $\ell_\infty$ denote the space of all bounded sequences of real numbers, i.e., $\tf x = (x_t)_{t=0}^\infty \in \ell_\infty$ if and only if $\sup_t |x_t|<\infty$, in which case we define $\linfnorm{\tf x} = \sup_t |x_t|$.  Similarly, we let $\ell^q_\infty$ denote the space of all $q$-tuples of elements of $\ell_\infty$: if $\tf x = (\tf x^1, \dots, \tf x^q) \in \ell^q_\infty$, then $\linfnorm{\tf x} = \max_{i=1,\dots, q}\linfnorm{\tf x^i}$.  We also define the extended space $\ell^{q}_{\infty,e}$ which is equal to the space of all $q$-tuples of sequences of real numbers.  We let $\tf S_+$ denote the right shift operator such tht if $\tf x = (x_t)_{t=0}^\infty$, then $\tf S_+\tf x = (0,x_0, x_1, \dots)$.  Similarly, we let $\tf S_-$ denote the left shift operator such that $\tf S_- \tf x = (x_1,x_2, \dots)$.  Hence $S_-S_+ = I$, but in general $S_+ S_- \neq I$.  

We let $\LTV^{p,q}$ be the space of all bounded linear causal operators mapping $\ell^q_\infty \to \ell^p_\infty$, and broadly refer to all such operators as $\ell_\infty$-stable.  If $\tf R \in \LTV^{p,q}$, then $\linffnorm{\tf R} := \sup_{\linfnorm{\x}\leq 1}\linfnorm{\tf R \tf x}$, which is the induced operator norm.  Note that each $\tf R \in \LTV^{p,q}$ can be completely characterized by its block lower-triangular pulse response matrix.  We denote by $\LTI^{p,q}$ the subspace of $\LTV^{p,q}$ consisting of time-invariant operators, and further denote by $\RHinf^{p,q}$ the subspace of $\LTI^{p,q}$ consisting of all stable finite-dimensional systems.  When the dimensions can be inferred from context, they will be omitted.  We use $z$ to denote the discrete-time z-transform variable: it follows that the restriction of $\Sp$ to $\LTI$ is given by $\frac{1}{z}$, and similarly the restriction of $\Sm$ to $\LTI$ is given by $z$.  Finally, we recall that an operator $\tf G \in \LTI^{p,q}$ is uniquely characterized by its $z$-transform $\tf G(z) = \sum_{i=0}^\infty z^{-i}G_i$ -- when there is no confusion, we will use $\tf G$ to denote both the operator and its $z$-transform.

\begin{figure}
\centering
\includegraphics[width=.4\columnwidth]{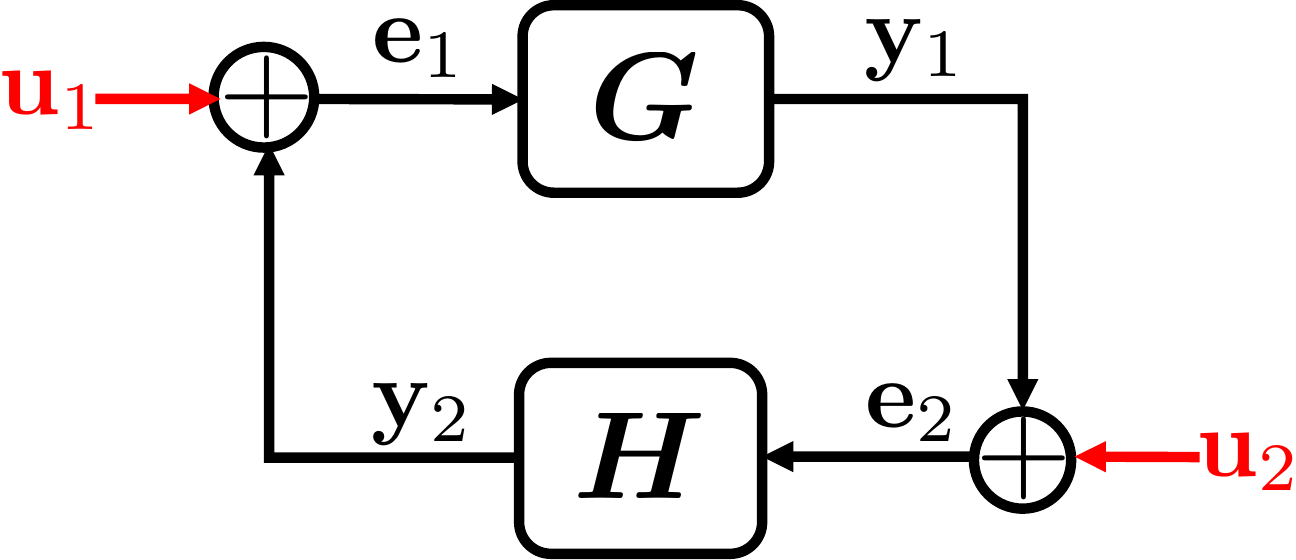}
\caption{A feedback interconnection between systems $\mathbf{G}$ and $\mathbf{H}$.}
\label{fig:well-posed}
\end{figure}

The feedback interconnection as in Fig. \ref{fig:well-posed}, where $\mathcal{G}: \ell_{\infty,e}^p \to \ell_{\infty,e}^q$ and $\mathcal{H}: \ell_{\infty,e}^q \to \ell_{\infty,e}^p$, is \emph{well-posed} if $(I-\mathbf{GH})^{-1}$ exists as a map from $\ell_{\infty,e}^q \to \ell_{\infty,e}^q$, and it is $\ell_\infty$-stable if it is (i) well posed, (ii) the map $(\uu_1,\uu_2) \to (\tf{e}_1,\tf e_2, \tf y_1, \tf y_2)$ takes $ \ell_{\infty}^p \times \ell_{\infty}^q$ into $\ell_{\infty}^p \times \ell_{\infty}^q \times \ell_{\infty}^p \times \ell_{\infty}^q$, and (iii) there exist constants $\alpha_1$ and $\alpha_2$, independent of $\uu_1$ and $\uu_2$, such that $\max\left\{\linfnorm{\tf e_1},\linfnorm{\tf e_2},\linfnorm{\tf y_1},\linfnorm{\tf y_1}\right\} \leq \alpha_1\linfnorm{\uu_1}+\alpha_2\linfnorm{\uu_2}$.  Similarly, a map $\tf G:\ell_{\infty,e}^q \to \ell_{\infty,e}^p$ is $\ell_\infty$-stable if it is causal, takes $\ell_\infty^q$ into $\ell_\infty^p$, and is bounded, i.e., $\linffnorm{\tf G} < \infty$ or equivalently, $\tf G \in \LTV^{p,q}$.  Necessary and sufficient conditions for the interconnection in Fig. 1 to be $\ell_\infty$-stable are then that $\tf G$ and $\tf H$ are $\ell_\infty$-stable, that the interconnection is well-posed, and that $(I-\tf G \tf H)^{-1}$ is $\ell_\infty$-stable.  We note that it is sufficient to verify that only $(I-\tf G \tf H)^{-1}$ is $\ell_\infty$-stable as it is easily shown that this is true if and only if $(I-\tf H \tf G)^{-1}$ is $\ell_\infty$-stable (see Proposition 1, \cite{khammash1990stability}).  Finally we note that in the case of finite dimensional linear-time-invariant (LTI) systems, the interconnection in Fig. \ref{fig:well-posed} is $\ell_\infty$-stable if and only if it is \emph{internally stable}.

\section{Operator System Level Parameterization}
\label{sec:operator}
Let $\tf A \in \LTV^{n,n}$ and $\tf B \in \LTV^{n,p}$, and consider the dynamical system mapping $\ell^n_{\infty,e} \times \ell^p_{\infty,e} \to \ell^n_{\infty,e}$ defined by
\begin{equation}
\tf x = \Sp\tf A\tf x + \Sp \tf B \tf u + \Sp \tf w, \ x_0 = 0.
\label{eq:dynamics}
\end{equation}
As $\Sp\tf A$ is strictly causal, the feedback interconnection defined by the dynamics \eqref{eq:dynamics} is well posed in the sense that $(I-\Sp\tf A)^{-1}$ exists as an operator from $\ell^n_{\infty,e} \times \ell^p_{\infty,e} \to \ell^n_{\infty,e}$ \cite{dahleh1994control}.  We emphasize that although we impose that $\A$ be $\ell_\infty$-stable, this does not imply that the dynamics \eqref{eq:dynamics} are themselves open-loop stable.  Rather, open-loop stability of the system is determined by the $\ell_\infty$ stability of the operator $(I-\Sp\tf A)^{-1}$.

Note that if $\tf A$ and $\tf B$ are memoryless and time invariant, i.e., if their matrix representations are block-diagonal $\tf A = \mathrm{blkdiag}(A,A,\dots)$, $\tf B = \mathrm{blkdiag}(B,B,\dots)$, then the system \eqref{eq:dynamics} reduce to the familiar finite-dimensional LTI system
\begin{equation}
x_{t+1} = A x_t + Bu_t + w_t, \, x_0 = 0,
\label{eq:lti-dynamics}
\end{equation}
where once again stability of the open-loop system is determined by the stability of the operator $(zI-A)^{-1}$ as opposed to the boundedness of the matrix $A$.

For LTI systems \eqref{eq:lti-dynamics}, the System Level Synthesis (SLS) framework \cite{wang2019system,anderson2019system} provides an appealing parameterization of all closed loop responses from $\tf w \to (\tf x, \tf u)$ achievable by a causal state-feedback $\ell_\infty$-stabilizing (equivalently internally stabilizing) LTI control law $\tf K$ such that $\tf u = \tf K \tf x$, as summarized in the following theorem.  We remind the reader that a controller $\tf K$ is $\ell_\infty$-stabilizing (equivalently internally stabilizing) if its interconnection with the system dynamics, as illustrated in Fig. \ref{fig:lti-interconnect}, defines an $\ell_\infty$-stable map from $(\tf w, \tf \delta_y, \tf \delta_u) \to (\tf x, \tf u, \what)$ (see III.A of \cite{wang2019system} ).

\begin{figure}
\centering
\includegraphics[width=.4\textwidth]{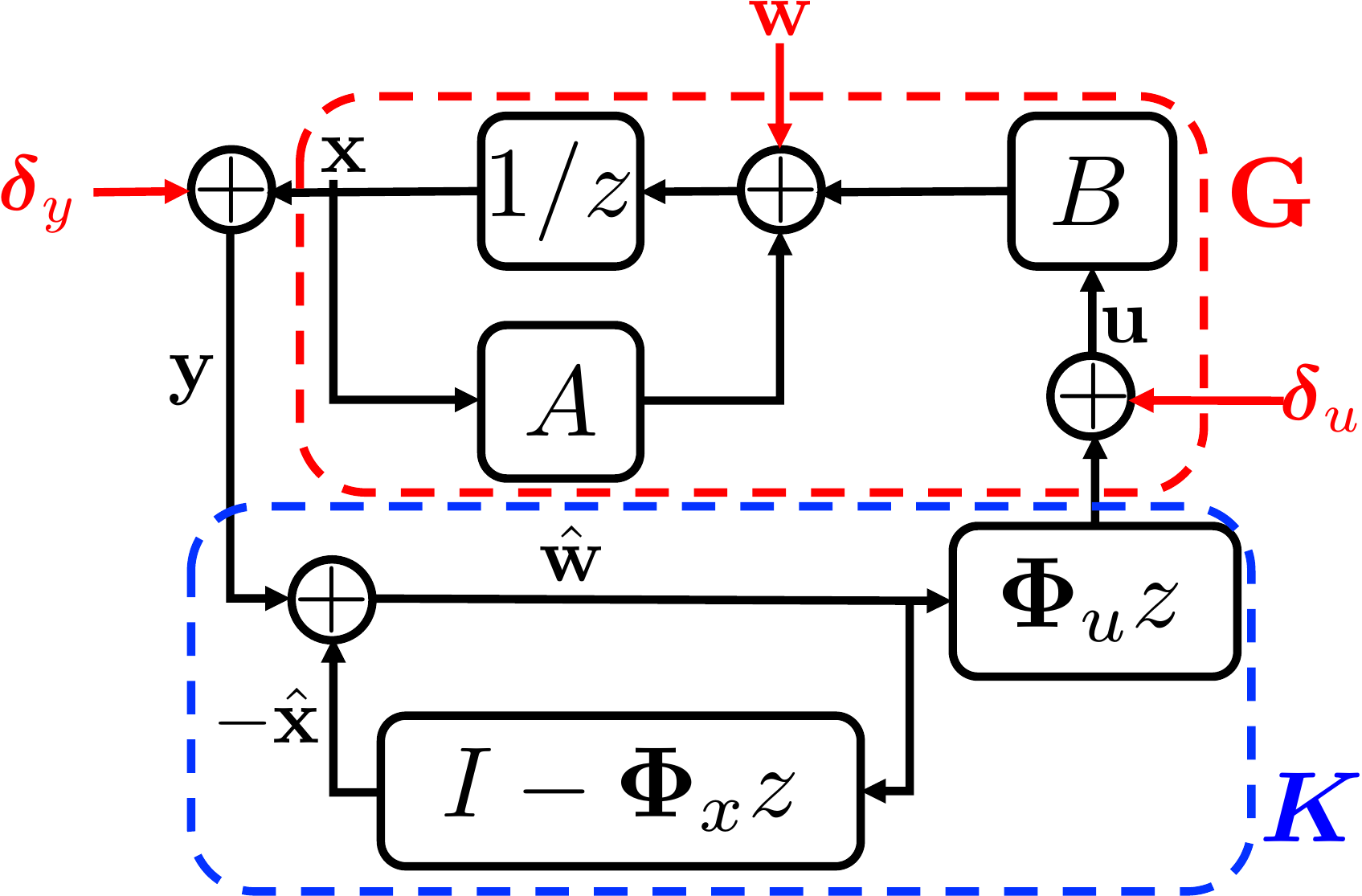}
\caption{The proposed $\ell_\infty$-stabilizing (equivalently internally stabilizing) state-feedback controller structure defined by equations \eqref{eq:lti-realization}.}
\label{fig:lti-interconnect}
\end{figure}

\begin{theorem}[Theorem 1, \cite{wang2019system}]\label{thm:lti-sls}
For the LTI system \eqref{eq:lti-dynamics} with causal state-feedback LTI control law $\tf u = \tf K \tf x$, the follwing are true:
\begin{enumerate}
\item The affine subspace defined by 
\begin{equation}
\begin{bmatrix} zI - A & - B \end{bmatrix} \begin{bmatrix} \Phix \\ \Phiu \end{bmatrix} = I, \, \ \Phix, \Phiu \in \frac{1}{z}\RHinf
\label{eq:lti-achievable}
\end{equation}
parameterizes all system responses
\begin{equation}
\begin{bmatrix} \tf x \\ \tf u \end{bmatrix} = \begin{bmatrix} \Phix \\ \Phiu \end{bmatrix} \tf w
\label{eq:lti-response}
\end{equation}
achievable by an internally stabilizing state-feedback controller $\tf K$.
\item For any transfer matrices $\left\{\Phix, \Phiu\right\}$ satisfying the constraints \eqref{eq:lti-achievable}, the control signal computed via\footnote{We note that due to the affine constraints \eqref{eq:lti-achievable}, $z\Phix-I$ is strictly causal, and thus feedback loop between $\tf{\hat x}$ and $\tf{\hat w}$ is well posed.}
\begin{equation}
\begin{array}{rcl}
\tf u &=& z\Phiu \tf{\hat w} \\
\tf{\hat w} &=& \tf x - \tf{\hat x}, \, \hat{w}_0 = 0, \\ 
\tf{\hat x} &=& (z\Phix - I)\tf{\hat w}
\end{array}
\label{eq:lti-realization}
\end{equation}
defines the control law $\uu = \Phiu\Phix^{-1}\x$, is internally stabilizing, and achieves the desired response \eqref{eq:lti-response}.
\end{enumerate}
\end{theorem}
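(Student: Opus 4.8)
The plan is to work entirely in the $z$-domain, using the fact (from Section~\ref{sec:notation}) that the restriction of $\Sp$ to $\LTI$ is multiplication by $\frac1z$, so that the dynamics \eqref{eq:lti-dynamics} read $(zI-A)\x = B\uu + \w$ and $\ell_\infty$-stability coincides with internal stability, i.e. with membership in $\RHinf$. For the necessity direction of claim~1, I would start from an arbitrary internally stabilizing $\K$, close the loop via $\uu = \K\x$ to obtain $\x = (zI - A - B\K)^{-1}\w$ and $\uu = \K(zI - A - B\K)^{-1}\w$, and \emph{define} $\Phix := (zI - A - B\K)^{-1}$, $\Phiu := \K(zI - A - B\K)^{-1}$. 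A one-line substitution then gives $(zI-A)\Phix - B\Phiu = (zI - A - B\K)(zI - A - B\K)^{-1} = I$, which is exactly \eqref{eq:lti-achievable}. Internal stability of the interconnection of Fig.~\ref{fig:lti-interconnect} places all the relevant closed-loop maps in $\RHinf$, and the one-step delay $\Sp$ built into \eqref{eq:lti-dynamics} forces the $\w\to\x$ map to be strictly proper; together with $\Phiu = \K\Phix$ this yields $\Phix,\Phiu \in \frac1z\RHinf$.

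For the sufficiency direction of claim~1, I would take any $\{\Phix,\Phiu\}$ satisfying \eqref{eq:lti-achievable} and first establish invertibility of $\Phix$: evaluating the affine constraint as $z\to\infty$, where $A\Phix, B\Phiu \to 0$ by strict properness, gives $z\Phix(\infty) = I$, so $\Phix$ is invertible and $\K := \Phiu\Phix^{-1}$ is a well-defined proper operator. Factoring the constraint as $(zI - A - B\Phiu\Phix^{-1})\Phix = I$, i.e. $(zI - A - B\K)\Phix = I$, shows $\Phix = (zI - A - B\K)^{-1}$, so closing the loop with $\K$ reproduces the state response $\Phix$ and hence $\uu = \K\x = \Phiu\Phix^{-1}\Phix\w = \Phiu\w$; the responses \eqref{eq:lti-response} are thus achieved exactly. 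What remains for sufficiency is \emph{internal} stabilizability, which is supplied constructively by claim~2.

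To prove claim~2, I would first verify that the realization \eqref{eq:lti-realization} implements $\K = \Phiu\Phix^{-1}$: eliminating $\xhat$ from $\what = \x - \xhat = \x - (z\Phix - I)\what$ yields $z\Phix\what = \x$, and substituting into $\uu = z\Phiu\what$ gives $\uu = z\Phiu (z\Phix)^{-1}\x = \Phiu\Phix^{-1}\x$. Well-posedness of the $\xhat$--$\what$ loop follows from the footnote, since $z\Phix - I$ is strictly proper (again because $z\Phix(\infty)=I$). The key structural observation is that the realization is assembled \emph{only} from the transfer matrices $z\Phix - I$ and $z\Phiu$, both of which lie in $\RHinf$ by hypothesis. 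I would then inject the test signals $(\w, \tf{\delta}_y, \tf{\delta}_u)$ of Fig.~\ref{fig:lti-interconnect}, solve the resulting interconnection for $(\x,\uu,\what)$, and check that every closed-loop map is a finite sum and product of $\Phix$, $\Phiu$, $z\Phix - I$, $z\Phiu$, and $I$, hence lies in $\RHinf$; this certifies $\ell_\infty$-stability, which for finite-dimensional LTI systems is internal stability.

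The routine algebra---the substitution verifying the affine identity and the computation that the realization reproduces the prescribed responses---is straightforward, so the real content of the argument lies entirely in internal stability. The subtle point is that exhibiting $\K = \Phiu\Phix^{-1}$ as a \emph{stabilizing transfer function} does not by itself preclude unstable hidden modes: $\Phiu\Phix^{-1}$ may involve unstable pole-zero cancellations. The realization \eqref{eq:lti-realization} is engineered precisely to avoid this, building the controller from the manifestly stable blocks $z\Phix - I, z\Phiu \in \RHinf$ so that all closed-loop maps are stable by construction. Carefully writing out those maps and confirming that the apparent feedthrough terms combine into strictly proper, well-posed loops is the step that demands the most care, and is the crux of the theorem.
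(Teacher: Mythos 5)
Your proposal is correct and follows essentially the same route as the paper: Theorem \ref{thm:lti-sls} is stated by citation, but the paper's proofs of its operator-level generalizations (Propositions \ref{prop:necessity} and \ref{prop:sufficiency}) are exactly your argument --- define $\Phix = (zI-A-BK)^{-1}$ and $\Phiu = K\Phix$ for necessity, and for sufficiency verify well-posedness of the $\xhat$--$\what$ loop and compute the full closed-loop map from $(\w,\tf{\delta}_y,\tf{\delta}_u)$ to $(\x,\uu,\what)$, all of whose entries are sums and products of the stable blocks $\Phix$, $\Phiu$, $z\Phix$, $z\Phiu$, $\A$, $\B$. Your closing observation that the realization \eqref{eq:lti-realization} is built from manifestly stable blocks precisely to rule out unstable hidden cancellations in $\Phiu\Phix^{-1}$ is the key point, and matches the paper's treatment.
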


Thus, in the case of LTI systems \eqref{eq:lti-dynamics}, the search for an optimal controller $\tf K$ can be converted to a search over system responses $\left\{\Phix,\Phiu\right\}$ constrained to lie in the affine space defined by equation \eqref{eq:lti-achievable}.  This fact, combined with the transparent mapping between the system responses and the corresponding controller implementation \eqref{eq:lti-realization}, has been successfully exploited for the synthesis of distributed optimal controllers for large-scale systems by imposing additional convex structural constraints, such as delay, sparsity, and locality subspace constriants, on the system responses and corresponding controller implementation \eqref{eq:lti-realization} -- we refer the reader to \cite{wang2014localized,wang2016localized,wang2018separable} for more details.

Another favorable feature of the parameterization defined in Theorem \ref{thm:lti-sls} is that it is provably stable under deviations from the subspace \eqref{eq:lti-achievable}, as summarized in the following theorem from \cite{matni2017scalable}.

\begin{theorem}[Theorem 2, \cite{matni2017scalable}]\label{thm:lti-robust}
Let $(\Phixh,\Phiuh,\D)$ be a solution to
\begin{equation}
\begin{bmatrix} zI - A & - B \end{bmatrix} \begin{bmatrix} \Phixh \\ \Phiuh \end{bmatrix} = I - \D, \, \ \Phixh, \Phiuh \in \frac{1}{z}\RHinf.
\label{eq:lti-robust}
\end{equation}
If $(I-\D)^{-1}$ exists as an operator from $\ell^n_{\infty,e} \to \ell^n_{\infty,e}$, then the controller implementation \eqref{eq:lti-realization} defined in terms of the transfer matrices $\left\{\Phixh,\Phiuh\right\}$ achieves the closed loop responses
\begin{equation}
\begin{bmatrix}\tf x \\ \tf u \end{bmatrix} = \begin{bmatrix} \Phixh \\ \Phiuh \end{bmatrix}(I-\D)^{-1}\tf w,
\end{equation}
on the LTI system \eqref{eq:lti-dynamics}, and is internally stabilizing if and only if $(I-\D)^{-1} \in \RHinf$.
\end{theorem}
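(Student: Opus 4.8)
The plan is to analyze the full feedback interconnection of the plant \eqref{eq:lti-dynamics} with the controller realization \eqref{eq:lti-realization} built from $\{\Phixh,\Phiuh\}$, but with disturbances $(\tf w,\tf\delta_y,\tf\delta_u)$ injected at the process, measurement, and actuation channels, and to compute every closed-loop map into the internal signals $(\tf x,\tf u,\what)$ in one pass. Setting $\tf\delta_y=\tf\delta_u=0$ at the end recovers the response claim, while inspecting the remaining maps yields the $\RHinf$ characterization.

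First I would eliminate the controller's internal variables. Combining $\what=(\tf x+\tf\delta_y)-\xhat$ with $\xhat=(z\Phixh-I)\what$ gives $z\Phixh\what=\tf x+\tf\delta_y$, and the footnote's observation that $z\Phixh-I$ is strictly causal guarantees this loop is well posed, so $\what$ is a well-defined causal function of $\tf x,\tf\delta_y$. Writing the plant in the $z$-domain as $(zI-A)\tf x=B(\tf u+\tf\delta_u)+\tf w$ with $\tf u=z\Phiuh\what$, and substituting $\tf x=z\Phixh\what-\tf\delta_y$, the left factor collapses under the affine constraint \eqref{eq:lti-robust}:
\[
\bigl[(zI-A)\Phixh-B\Phiuh\bigr]\,z\what=(I-\D)\,z\what=\tf w+(zI-A)\tf\delta_y+B\tf\delta_u .
\]
Invoking the hypothesis that $(I-\D)^{-1}$ exists on $\ell^n_{\infty,e}$, I would solve $z\what=(I-\D)^{-1}[\tf w+(zI-A)\tf\delta_y+B\tf\delta_u]$ and back-substitute to express $\tf x,\tf u,\what$ as explicit maps of $(\tf w,\tf\delta_y,\tf\delta_u)$; each such map is a left-multiple of $(I-\D)^{-1}$ by $\Phixh$, $\Phiuh$, or $z^{-1}I$, up to polynomial right factors and a harmless $-I$ term. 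Setting $\tf\delta_y=\tf\delta_u=0$ then gives $\tf x=\Phixh(I-\D)^{-1}\tf w$ and $\tf u=\Phiuh(I-\D)^{-1}\tf w$, establishing the first claim.

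For the forward ($\Leftarrow$) direction of the stability equivalence, I would verify that each block of the map lies in $\RHinf$ whenever $(I-\D)^{-1}\in\RHinf$. The only non-obvious factors are the polynomial terms $(zI-A)$ entering through the $\tf\delta_y$ channel; but because $\Phixh,\Phiuh\in\tfrac1z\RHinf$ are strictly proper, products such as $\Phiuh(I-\D)^{-1}(zI-A)$ remain proper and stable, so every entry lies in $\RHinf$ and the interconnection is $\ell_\infty$-stable. The main obstacle is the converse ($\Rightarrow$): internal stability only asserts that the individual closed-loop maps are stable, and from a map like $\tf w\to\tf x=\Phixh(I-\D)^{-1}$ one cannot cancel the (possibly non-left-invertible) factor $\Phixh$ to recover $(I-\D)^{-1}$ itself.

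The resolution is to read off the $\tf w\to\what$ map, which is exactly $z^{-1}(I-\D)^{-1}$. The standing hypothesis that $(I-\D)^{-1}$ exists as an operator forces its feedthrough term $I-\D(\infty)$ to be invertible, so $(I-\D)^{-1}$ is proper; consequently multiplying $z^{-1}(I-\D)^{-1}$ by $z$ reintroduces no pole at infinity, and $z^{-1}(I-\D)^{-1}\in\RHinf$ therefore forces every pole of $(I-\D)^{-1}$ into the open unit disk, giving $(I-\D)^{-1}\in\RHinf$. This pins the stability of the entire interconnection to the single operator $(I-\D)^{-1}$ and completes the equivalence, with the existence hypothesis playing the dual role of guaranteeing well-posedness and the properness needed for the converse.
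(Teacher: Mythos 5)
Your proposal is correct and follows essentially the same route as the paper: the paper proves this statement (in its generalized operator form, Theorem~\ref{thm:robust-sls}) by computing the full closed-loop map from $(\tf w,\tf\delta_y,\tf\delta_u)$ to $(\tf x,\tf u,\what)$, reading off the response \eqref{eq:robust-response} from the first column, and observing that every entry is $\ell_\infty$-stable precisely when $(I-\D)^{-1}$ is, with the $\tf w\to\what$ entry $\Sp(I-\D)^{-1}$ supplying the converse exactly as you argue. Your additional care on the ``only if'' direction (properness of $(I-\D)^{-1}$ from well-posedness, so that the delayed map determines stability of $(I-\D)^{-1}$ itself) makes explicit a step the paper leaves implicit.
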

This parameterization has proved crucial in providing tractable approximations to non-convex distributed control problems \cite{matni2017scalable}, and in providing sub-optimality bounds for robust controllers as applied to learned systems \cite{dean2017sample,dean2019safely}.  However, in these past works, very crude approximations based solely on small gain bounds and triangle inequalities were used to control the effects of the uncertain map $(I-\D)^{-1}$ on system stability and performance.  In this work we show that Theorems \ref{thm:lti-sls} and \ref{thm:lti-robust} can be extended to a more general setting that allows connections to well developed tools from the robust control literature \cite{khammash1990stability,dahleh1994control}.  Although we focus on $\mathcal{L}_1$ optimal control in this paper due to its favorable separability structure (cf. \S \ref{sec:extensions}), we expect these results to carry over naturally to the $\mathcal{H}_\infty$ setting.

\subsection{Necessary Conditions}
Here we characterize a set of affine constraints that the closed loop system responses of system \eqref{eq:dynamics} must satisfy if they are induced by a linear, causal, and $\ell_\infty$-stabilizing controller $\tf K : \ell^n_{\infty,e} \to \ell^p_{\infty,e}$ via the control law $\tf u = \tf K \tf x$.  In particular, a controller $\tf K$ is $\ell_\infty$-stabilizing if the interconnection illustrated in Fig. \ref{fig:simple-interconnect} is $\ell_\infty$-stable as a map from $(\tf w, \tf \delta_y, \tf \delta_u)\to (\x,\uu,\tf \beta)$, where $\tf \beta$ is any signal internal to the controller $\tf K$.

\begin{figure}
\centering
\includegraphics[width=.4\textwidth]{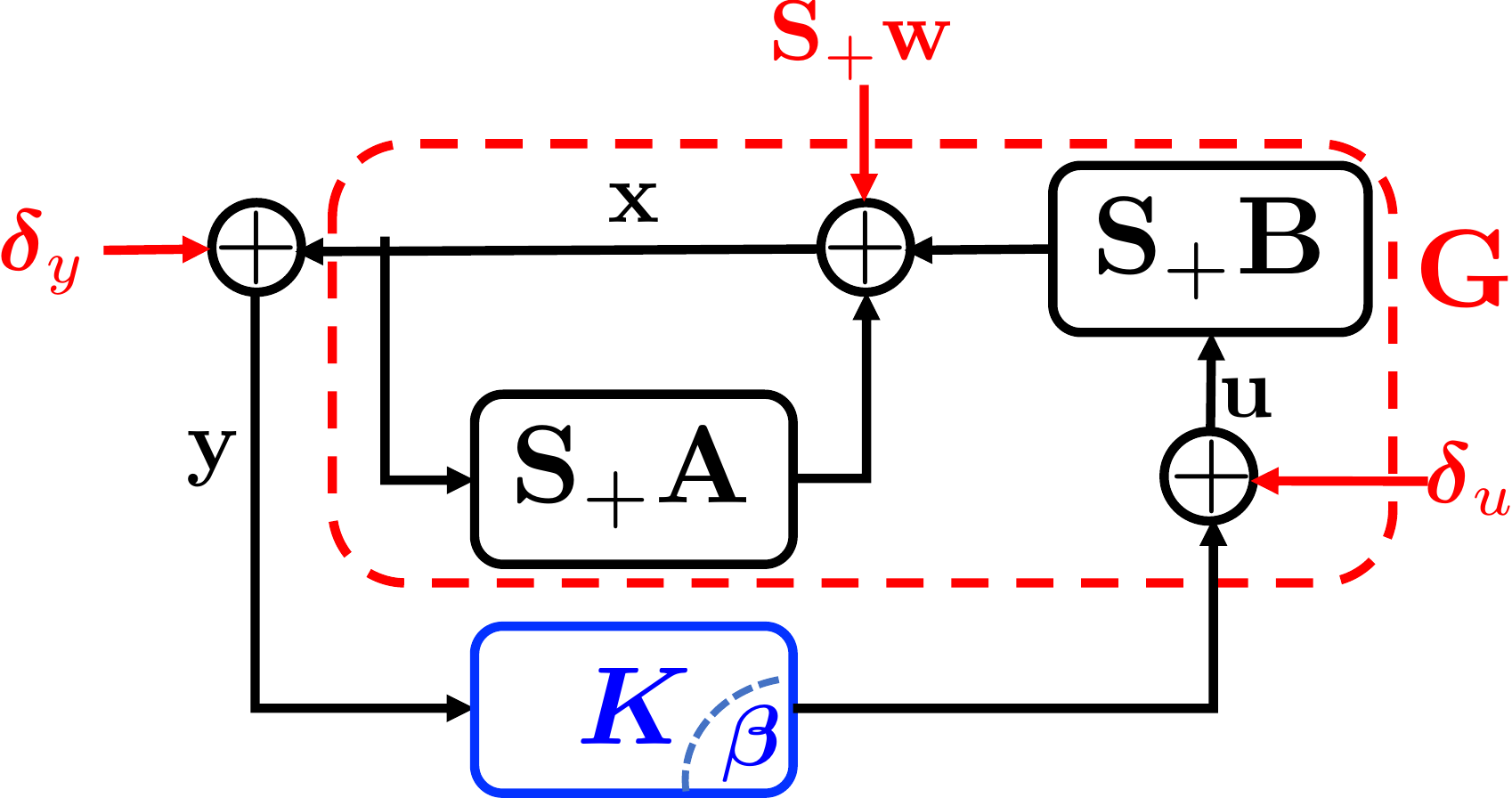}
\caption{A controller $\tf K$ is said to be $\ell_\infty$-stabilizing with respect to dynamics \eqref{eq:dynamics} if the illustrated interconnection is $\ell_\infty$-stable as a map from $(\tf w, \tf \delta_y, \tf \delta_u)\to (\x,\uu,\tf \beta)$, where $\tf \beta$ is any signal internal to the controller $\tf K$.}
\label{fig:simple-interconnect}
\end{figure}

\begin{proposition}\label{prop:necessity}
Let $\tf K: \ell^n_{\infty,e} \to \ell^p_{\infty,e}$ be a linear, causal, and $\ell_\infty$-stabilizing controller. 
Then all maps taking $\tf w \to (\tf x, \tf u)$ achievable by such a $\tf K$ satisfy the constraints
\begin{equation}
\begin{aligned}
&\begin{bmatrix} I-\Sp\A & - \Sp\B\end{bmatrix}\begin{bmatrix} \Phix \\ \Phiu \end{bmatrix} = \Sp, \\ &\Phix, \Phiu \text{ strictly causal, linear, and $\ell_\infty$-stable.}
\end{aligned}
\label{eq:osls-achievable}
\end{equation}
\end{proposition}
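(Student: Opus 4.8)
The plan is to obtain the closed-loop responses explicitly by substituting the control law $\uu = \K\x$ into the dynamics \eqref{eq:dynamics}, read off $\Phix$ and $\Phiu$ from the resulting map, and then verify the affine identity in \eqref{eq:osls-achievable} together with the three regularity claims. Substituting $\uu = \K\x$ into \eqref{eq:dynamics} yields $\x = \Sp\A\x + \Sp\B\K\x + \Sp\w$, i.e.\ $(I - \Sp\A - \Sp\B\K)\x = \Sp\w$. Because $\Sp$ is strictly causal and $\A,\B,\K$ are causal, the operator $\Sp(\A + \B\K)$ is strictly causal, so the same well-posedness argument invoked for $(I-\Sp\A)^{-1}$ in the discussion following \eqref{eq:dynamics} shows that $(I - \Sp\A - \Sp\B\K)^{-1}$ exists as a causal operator on $\ell^n_{\infty,e}$. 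I would then define
\[
\Phix := (I - \Sp\A - \Sp\B\K)^{-1}\Sp, \qquad \Phiu := \K\Phix,
\]
so that $\x = \Phix\w$ and $\uu = \K\x = \Phiu\w$; that is, the stacked operator with blocks $\Phix,\Phiu$ is exactly the map $\w \to (\x,\uu)$ realized by $\K$.

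Next I would verify the affine constraint by direct computation, using $\Phiu = \K\Phix$ and the definition of $\Phix$:
\[
\begin{bmatrix} I - \Sp\A & -\Sp\B \end{bmatrix}\begin{bmatrix} \Phix \\ \Phiu \end{bmatrix} = (I - \Sp\A)\Phix - \Sp\B\K\Phix = (I - \Sp\A - \Sp\B\K)\Phix = \Sp,
\]
which is precisely the equality in \eqref{eq:osls-achievable}.

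It then remains to establish the three structural properties. Linearity of $\Phix,\Phiu$ is immediate from the linearity of $\A,\B,\K$. For strict causality, note that $(I - \Sp\A - \Sp\B\K)^{-1} = I + \Sp(\A+\B\K)(I - \Sp\A - \Sp\B\K)^{-1}$ is causal (it is the identity plus a strictly causal operator), so right-composing with the strictly causal $\Sp$ makes $\Phix$ strictly causal, and $\Phiu = \K\Phix$ is strictly causal since $\K$ is causal. The $\ell_\infty$-stability of $\Phix,\Phiu$ is where the stabilizing hypothesis is used: by definition, $\K$ being $\ell_\infty$-stabilizing means the interconnection of Fig.~\ref{fig:simple-interconnect} is $\ell_\infty$-stable as a map from $(\w,\tf\delta_y,\tf\delta_u)$ to the internal signals, and setting $\tf\delta_y = \tf\delta_u = 0$ recovers the sub-block $\w \to (\x,\uu)$ given by $\Phix,\Phiu$, which is therefore a bounded map, i.e.\ $\Phix,\Phiu \in \LTV$.

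I expect the only delicate point to be this last step: carefully arguing that $\ell_\infty$-stability of the full interconnection, with injected disturbances $\tf\delta_y$ and $\tf\delta_u$, forces $\ell_\infty$-stability of the particular response with blocks $\Phix,\Phiu$ --- and hence boundedness of the operator $(I - \Sp\A - \Sp\B\K)^{-1}$ that appears implicitly in it. By contrast, the well-posedness (existence of the inverse on the extended space) and the algebraic verification of the affine identity are routine once the strict causality of $\Sp(\A+\B\K)$ is noted.
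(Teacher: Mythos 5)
Your proposal is correct and follows essentially the same route as the paper: substitute $\uu=\K\x$, identify $\Phix=(I-\Sp(\A+\B\K))^{-1}\Sp$ and $\Phiu=\K\Phix$, verify the affine identity algebraically, and invoke the $\ell_\infty$-stabilizing hypothesis for boundedness. You spell out the strict-causality and stability checks that the paper compresses into ``basic algebra'' and ``easily verified,'' but the substance is identical.
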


\begin{proof}
Basic algebra shows that the map from $\tf w \to \x$ is given by $(I-\Sp(\A+\B\K))^{-1}\Sp$, and that the map from $\tf w \to \uu$ is given by $\K(I-\Sp(\A+\B\K))^{-1}\Sp$.  By assumption, both of these maps are $\ell_\infty$-stable,\footnote{That they are well posed is immediate as $\Sp(\A+\B\K)$ is strictly causal.} as $\tf K$ is an $\ell_\infty$-stabilizing controller.
Defining
\[
\begin{bmatrix}
\Phix \\ \Phiu
\end{bmatrix} =
\begin{bmatrix} I \\ \K \end{bmatrix}(I-\Sp(\A+\B\K))^{-1}\Sp,
\]
it is easily verified that $\left\{\Phix,\Phiu\right\}$ satisfy constraint \eqref{eq:osls-achievable}.
\end{proof}
\begin{remark}
If $\A$ and $\B$ are LTI, and $\K$ is LTI, then so are $\{\Phix,\Phiu\}$, and all are uniquely characterized by their $z$-transforms.  Further the right shift operator $\Sp$, when restricted to $\LTI$, can be written as $\Sp=1/z$.  In this case, constraint \eqref{eq:osls-achievable} simplifies to
\begin{equation}
\begin{bmatrix} I - \frac{1}{z}\A& -\frac{1}{z}\B \end{bmatrix} \begin{bmatrix} \Phix \\ \Phiu\end{bmatrix} = \frac{1}{z}I, \, \ \Phix, \Phiu \in \frac{1}{z}\RHinf,
\end{equation}
which can be viewed as a generalization of constraint \eqref{eq:lti-achievable} to LTI dynamics defined by dynamic operators $\A(z)$ and $\B(z)$.  Similarly, if $\A$ and $\B$ are memoryless and LTI, and $\K$ is LTI, then $\{\Phix,\Phiu\}$ are LTI, and constraint \eqref{eq:osls-achievable} simplifies to
\begin{equation}
\begin{bmatrix} I - \frac{1}{z}A & -\frac{1}{z}B \end{bmatrix} \begin{bmatrix} \Phix \\ \Phiu \end{bmatrix} = \frac{1}{z}I, \, \ \Phix, \Phiu \in \frac{1}{z}\RHinf,
\end{equation}
which is clearly equivalent to \eqref{eq:lti-achievable}.
\end{remark}

\subsection{Controller Implementation}
\begin{figure}
\centering
\includegraphics[width=.4\textwidth]{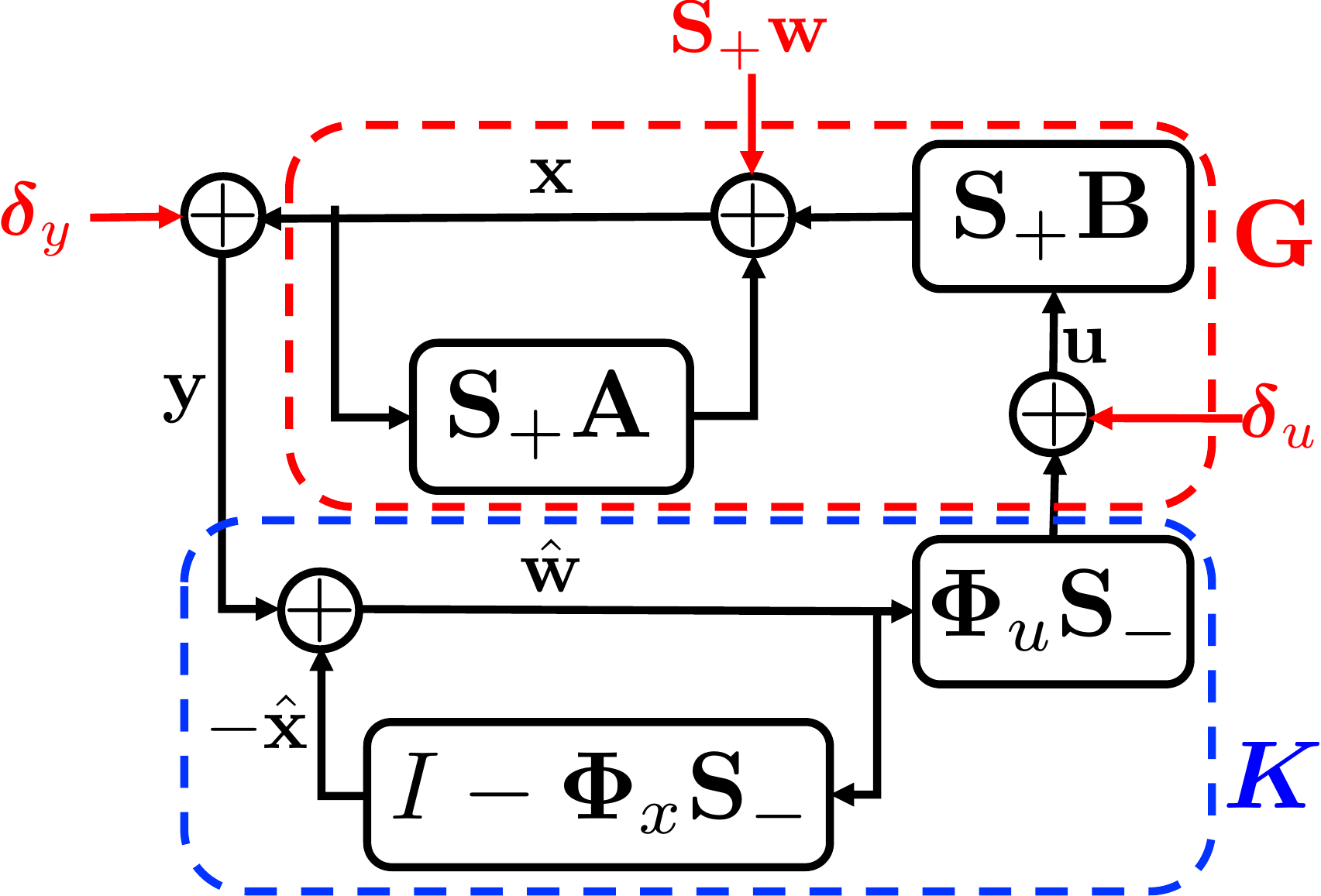}
\caption{The proposed state-feedback controller structure defined by equations \eqref{eq:realization}.}
\label{fig:realization}
\end{figure}
We now show how to construct an internally stabilizing controller from any strictly causal, linear, and $\ell_\infty$-stable operators $\left \{\Phix,\Phiu\right\}$ satisfying constraint \eqref{eq:osls-achievable} that achieves the desired response from $\tf w \to (\tf x, \tf u)$.
\begin{proposition}\label{prop:sufficiency}
Let $\left\{\Phix,\Phiu\right\}$ satisfy constraint \eqref{eq:osls-achievable}.  Then the controller implementation shown in Fig. \ref{fig:realization}, described by the equations
\begin{equation}
\begin{array}{rcl}
\tf u &=& \Phix \Sm \tf{\hat w}\\
\tf{\hat w} & = & \x - \tf{\hat x}, \, \hat{w}_0 = 0, \\
\tf{\hat x} &=& (\Phix\Sm-I)\tf{\hat w},
\end{array}
\label{eq:realization}
\end{equation}
is $\ell_\infty$-stabilizing.  In particular, the resulting map from $(\tf w, \tf \delta_y, \tf \delta_u) \to (\tf x, \tf u, \tf{\hat w})$ is $\ell_\infty$-stable, and achieves the desired closed loop response
\begin{equation}
\begin{bmatrix} \tf x \\ \tf u \end{bmatrix} = \begin{bmatrix} \Phix \\ \Phiu \end{bmatrix}\tf w.
\label{eq:response}
\end{equation}
\end{proposition}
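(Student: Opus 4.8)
The plan is to treat the realization \eqref{eq:realization} as the operator analogue of the LTI realization \eqref{eq:lti-realization} under the substitution $z \mapsto \Sm$, and to verify the two claims — that the interconnection is $\ell_\infty$-stabilizing and that it achieves \eqref{eq:response} — by (i) solving the controller's internal feedback loop in closed form, (ii) substituting the result into the plant dynamics \eqref{eq:dynamics}, and (iii) using the achievability constraint \eqref{eq:osls-achievable} to collapse the resulting closed-loop equation to a trivial identity. The only properties I will use are that $\Phix,\Phiu,\A,\B$ are bounded and causal, that $\Phix,\Phiu$ are strictly causal, that $\Sm\Sp = I$ while $\Sp\Sm \neq I$, and that $\Sp,\Sm$ have unit induced norm.

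First I would establish well-posedness of the internal loop $\tf{\hat w} = \x - \tf{\hat x}$, $\tf{\hat x} = (\Phix\Sm - I)\tf{\hat w}$. Rearranging \eqref{eq:osls-achievable} gives $\Phix = \Sp(I + \A\Phix + \B\Phiu)$, so the $(t,t-1)$ pulse-response block of $\Phix$ equals the $(t-1,t-1)$ block of $I + \A\Phix + \B\Phiu$; since $\Phix,\Phiu$ are strictly causal and $\A,\B$ causal, the diagonal blocks of $\A\Phix$ and $\B\Phiu$ vanish, so this block is $I$. Equivalently, $\Phix\Sm$ has identity diagonal blocks, so $\Phix\Sm - I$ is strictly causal, which makes the internal loop well-posed (just as $\Sp\A$ strictly causal makes \eqref{eq:dynamics} well-posed). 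Eliminating $\tf{\hat x}$ then yields $\Phix\Sm\tf{\hat w} = \x$; writing $\tf v := \Sm\tf{\hat w}$ I obtain $\Phix\tf v = \x$ and $\uu = \Phiu\Sm\tf{\hat w} = \Phiu\tf v$, while the initial condition $\hat w_0 = 0$ gives $\tf{\hat w} = \Sp\Sm\tf{\hat w} = \Sp\tf v$.

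Next I would close the plant loop. Substituting $\x = \Phix\tf v$ and $\uu = \Phiu\tf v$ into \eqref{eq:dynamics} gives $[(I-\Sp\A)\Phix - \Sp\B\Phiu]\tf v = \Sp\tf w$, and \eqref{eq:osls-achievable} reduces the bracket to $\Sp$, so $\Sp\tf v = \Sp\tf w$; applying $\Sm$ and using $\Sm\Sp = I$ gives $\tf v = \tf w$. Hence $\x = \Phix\tf w$ and $\uu = \Phiu\tf w$, which is exactly \eqref{eq:response}. For internal stability I would repeat the computation with the test signals $\tf\delta_y,\tf\delta_u$ injected as in Fig. \ref{fig:realization}; the identical collapse produces an explicit expression of the form $\tf v = (\Sm - \A)\tf\delta_y + \B\tf\delta_u + \tf w$, after which $\x = \Phix\tf v - \tf\delta_y$, $\uu = \Phiu\tf v$, and $\tf{\hat w} = \Sp\tf v$ exhibit the map $(\tf w,\tf\delta_y,\tf\delta_u)\mapsto(\x,\uu,\tf{\hat w})$ as a finite sum of compositions of the bounded operators $\Phix,\Phiu,\A,\B,\Sp,\Sm$. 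Boundedness of this map is then immediate, establishing $\ell_\infty$-stability, and together with well-posedness this is internal stabilization.

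I expect the main obstacle to be the well-posedness step — showing that the first sub-diagonal block of $\Phix$ is the identity (equivalently $\Phix\Sm - I$ strictly causal), which is the operator/time-varying analogue of the LTI fact $\Phi_1 = I$ noted in the footnote to Theorem \ref{thm:lti-sls} and which must here be argued at the level of pulse-response blocks rather than $z$-transform coefficients. The secondary bookkeeping hazard is the non-commuting, non-invertible pair $\Sp,\Sm$: one must use $\Sm\Sp = I$ while never assuming $\Sp\Sm = I$, and in particular justify the recovery $\tf{\hat w} = \Sp\tf v$ from $\tf v = \Sm\tf{\hat w}$ via $\hat w_0 = 0$. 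I would also add a one-line remark that the overall interconnection is well-posed because both loop gains are strictly causal — the internal loop by the above, and the plant–controller loop because the plant map $(I-\Sp\A)^{-1}\Sp\B$ carries a factor of $\Sp$.
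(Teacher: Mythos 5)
Your proposal is correct and follows essentially the same route as the paper's proof: the key step in both is that the achievability constraint forces the first sub-diagonal blocks of $\Phix$ to be identity, so $\Phix\Sm - I$ is strictly causal and the internal loop is well posed, after which the closed-loop maps from $(\w,\tf\delta_y,\tf\delta_u)$ to $(\x,\uu,\what)$ are computed explicitly and seen to be finite compositions of the bounded operators $\Phix,\Phiu,\A,\B,\Sp,\Sm$. Your substitution $\tf v := \Sm\what$ is just a tidy way of organizing the same ``rote calculation,'' and your careful handling of $\Sm\Sp = I$ versus $\Sp\Sm \neq I$ matches the paper's intent.
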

\begin{proof}
From equations \eqref{eq:dynamics} and \eqref{eq:realization} (alternatively, from Fig. \ref{fig:realization}), we have that
\begin{equation}
\begin{array}{rcl}
\x &=& \Sp\A\x + \Sp\B\uu + \Sp \w, \ x_0 =0\\
\uu &=& \Phiu\Sm\what + \tf \delta_u\\
\what &=& \x + \tf \delta_y + (I-\Phix\Sm)\what, \ \hat{w}_0=0,
\end{array}
\label{eq:internal-stability}
\end{equation}
where we emphasize that $\hat{w}_0 = 0$ such that $\what$ is a strictly causal signal.  We first observe that the restriction of $(I-\Phix\Sm)$ to strictly causal signals is itself strictly causal, as the constraint \eqref{eq:osls-achievable} enforces that the block lower triangular matrix representation of $\Phix$ has identify matrices along its first block sub-diagonal, i.e., for $\Phix = (\Phi_x(i,j))_{i,j=0}^\infty$ the block lower triangular matrix representation of $\Phix$, we have that $\Phi_x(i,i-1)=I$ for all $i\geq 1$.  It therefore follows that the feedback loop between $\xhat$ and $\what$ is well posed.  By rote calculation, it follows from equation \eqref{eq:internal-stability} that the closed loop maps from $(\tf w, \tf \delta_y, \tf \delta_u) \to (\tf x, \tf u, \tf{\hat w})$ are given by
\begin{equation}
\begin{bmatrix}
\x \\ \uu \\ \what
\end{bmatrix} =
\begin{bmatrix} \Phix & \Phix(\Sm - \A) & \Phix\B \\
\Phiu & \Phiu(\Sm-\A) & I + \Phiu\B \\
\Sp & I - \Sp\A & \Sp \B
\end{bmatrix} \begin{bmatrix} \tf w \\ \tf \delta_y \\ \tf \delta_u \end{bmatrix}.
\end{equation}
By assumption, $\Phix$, $\Phiu$, $\A$, and $\B$ are all $\ell_\infty$-stable, and hence the interconnection illustrated in Fig. \ref{fig:realization}, and described by equations \eqref{eq:dynamics} and \eqref{eq:internal-stability} is $\ell_\infty$-stable.
\end{proof}

\begin{remark}
If $\A$ and $\B$ are memoryless and LTI, and $\K$ is LTI, then so are the system responses $\left\{\Phix,\Phiu\right\}$, and consequently the right and left shift operators $\Sp$ and $\Sm$ become $\frac{1}{z}$ and $z$, respectively, recovering the controller implementation \eqref{eq:lti-realization}.
\end{remark}

\subsection{Robust Operator System Level Synthesis}
Thus Propositions \ref{prop:necessity} and \ref{prop:sufficiency} show that the parameterization of Theorem \ref{thm:lti-sls} can be extended to a class of dynamics described by bounded and causal linear operators in feedback with a causal linear controller.  We now show that this extension enjoys similar stability properties with respect to perturbations from the subspace \eqref{eq:osls-achievable}.

\begin{theorem}\label{thm:robust-sls}
Let $\A\in\LTV^{n,n}$ and $\B\in\LTV^{n,p}$, and suppose that $\left\{\Phixh,\Phiuh\right\}$ satisfy
\begin{equation}
\begin{aligned}
&\begin{bmatrix} I-\Sp\A & - \Sp\B\end{bmatrix}\begin{bmatrix} \Phixh \\ \Phiuh \end{bmatrix} = \Sp(I-\D), \\ 
& \Phix, \Phiu \text{ strictly causal, linear, and $\ell_\infty$-stable,}
\end{aligned}
\end{equation}
for $\D$ a strictly causal linear operator from $\ell^n_{\infty,e}\to \ell^n_{\infty,e}$.  Then the controller implementation \eqref{eq:realization} defined in terms of the operators $\left\{\Phixh,\Phiuh\right\}$ is well posed and achieves the following response
\begin{equation}
\begin{bmatrix} \x \\ \uu \end{bmatrix} = \begin{bmatrix}\Phixh\\ \Phiuh \end{bmatrix}(I-\D)^{-1}\w.
\label{eq:robust-response}
\end{equation}
Further, this interconnection is $\ell_\infty$-stable if and only if $(I-\D)^{-1}$ is $\ell_\infty$-stable.
\end{theorem}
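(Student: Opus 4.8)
The plan is to mirror the proof of Proposition~\ref{prop:sufficiency}, carrying the perturbation $\D$ through the algebra and reading off where $(I-\D)^{-1}$ enters. First I would check well-posedness of the realization \eqref{eq:realization} built from $\{\Phixh,\Phiuh\}$. Inspecting the first block sub-diagonal of the perturbed constraint in the statement, the term $\Sp\D$ is strictly causal and vanishes there, while $\Sp$ contributes $I$; matching sub-diagonals exactly as in Proposition~\ref{prop:sufficiency} still forces $\Phi_x(i,i-1)=I$ for all $i\ge 1$. Hence $\Phixh\Sm$ is causal with identity on its diagonal, $I-\Phixh\Sm$ is strictly causal, and the feedback loop between $\xhat$ and $\what$ is well posed. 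Since $\Sp\A$, $\Sp\B$, and $\D$ are strictly causal, $(I-\D)^{-1}$ exists on the extended space $\ell^n_{\infty,e}$ and the entire interconnection is well posed there; note this existence is automatic here, whereas Theorem~\ref{thm:lti-robust} had to assume it.

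Next I would derive the response. Writing the internal equations as in \eqref{eq:internal-stability} but with hats, substituting $\x=\Phixh\Sm\what-\tf\delta_y$ and $\uu=\Phiuh\Sm\what+\tf\delta_u$ into the dynamics \eqref{eq:dynamics}, and collecting the $\Sm\what$ terms, the coefficient operator becomes $(I-\Sp\A)\Phixh-\Sp\B\Phiuh$, which the perturbed constraint collapses to $\Sp(I-\D)$. This yields
\begin{equation}
\Sp(I-\D)\Sm\what=\Sp\w+(I-\Sp\A)\tf\delta_y+\Sp\B\tf\delta_u .
\end{equation}
Left-multiplying by $\Sm$ and using $\Sm\Sp=I$ gives $(I-\D)\Sm\what=\w+(\Sm-\A)\tf\delta_y+\B\tf\delta_u$, hence $\Sm\what=(I-\D)^{-1}[\w+(\Sm-\A)\tf\delta_y+\B\tf\delta_u]$. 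Setting $\tf\delta_y=\tf\delta_u=0$ and back-substituting into $\x=\Phixh\Sm\what$, $\uu=\Phiuh\Sm\what$ recovers the advertised response \eqref{eq:robust-response}.

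Back-substituting in general produces the closed-loop map
\begin{equation}
\begin{bmatrix}\x\\\uu\\\what\end{bmatrix}
=\begin{bmatrix}
\Phixh M & \Phixh M(\Sm-\A)-I & \Phixh M\B\\
\Phiuh M & \Phiuh M(\Sm-\A) & I+\Phiuh M\B\\
\Sp M & \Sp M(\Sm-\A) & \Sp M\B
\end{bmatrix}
\begin{bmatrix}\w\\\tf\delta_y\\\tf\delta_u\end{bmatrix},\quad M:=(I-\D)^{-1},
\end{equation}
which generalizes the closed-loop map of Proposition~\ref{prop:sufficiency}. For the ``if'' direction I would argue every block is $\ell_\infty$-stable whenever $M$ is: because $\Phixh$, $\Phiuh$, and $\Sp$ are strictly causal and $M=I+\D+\D^2+\cdots$ is causal with identity diagonal, each of $\Phixh M$, $\Phiuh M$, $\Sp M$ is strictly causal; and a strictly causal operator composed on the right with the (non-causal) left shift $\Sm$ is again causal. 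Thus every $\Sm$ is ``absorbed'' by the strictly causal factor to its left, and each block is a finite sum and composition of the $\ell_\infty$-stable operators $\Phixh,\Phiuh,\A,\B,\Sp$ with $M$, hence $\ell_\infty$-stable whenever $M$ is.

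For the ``only if'' direction I would isolate the $\w\to\what$ block, which equals $\Sp M$. If the interconnection is $\ell_\infty$-stable this block is bounded, and since $M=\Sm\Sp M$ with $\linfnorm{\Sm\tf y}=\sup_t|y_{t+1}|\le\linfnorm{\tf y}$, the operator $M$ is bounded; it is causal because $\D$ is, so $M=(I-\D)^{-1}$ is $\ell_\infty$-stable. The main obstacle is the bookkeeping forced by the non-causal shift $\Sm$: one must verify that $\Sm$ never appears ``unprotected'' in any block (it is always preceded by a strictly causal factor, so the product stays causal), and, in the converse, that the lone use of $\Sm\Sp=I$ to recover $M$ preserves boundedness. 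A secondary subtlety is the initial condition $\hat w_0=0$, which the strict causality of $I-\Phixh\Sm$ together with $x_0=0$ reconciles with the $\Sm$-based inversion used above.
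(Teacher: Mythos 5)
Your proof is correct and follows essentially the same route as the paper: repeat the algebra of Proposition~\ref{prop:sufficiency} with the perturbed affine constraint, observe that every block of the closed-loop map from $(\w,\tf\delta_y,\tf\delta_u)$ to $(\x,\uu,\what)$ factors through $(I-\D)^{-1}$, and conclude. Your version of the closed-loop matrix (the extra $-I$ in the $(1,2)$ block and $\Sp(I-\D)^{-1}$ rather than $(I-\D)^{-1}\Sp$ in the last row) is in fact the correct one --- the paper's display has minor typos there --- and your explicit checks of well-posedness under the perturbation and of the ``only if'' direction simply fill in details the paper leaves implicit.
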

\begin{proof}
As $\D$ is strictly causal by assumption, $I_\D:= (I-\D)^{-1}$ exists as a map from $\ell^n_{\infty,e} \to \ell^n_{\infty,e}$.  Going through a similar argument as that in the proof of Proposition \ref{prop:sufficiency}, we observe that
\begin{equation}
\scriptstyle
\begin{bmatrix}
\x \\ \uu \\ \what
\end{bmatrix} =
\begin{bmatrix} \Phix I_\D & \Phix I_\D(\Sm - \A) & \Phix I_\D \B \\
\Phiu I_\D& \Phiu I_\D(\Sm-\A) & I + \Phiu I_\D\B \\
\Sp I_\D & I_\D(I - \Sp\A) & I_\D\Sp \B
\end{bmatrix} \begin{bmatrix} \tf w \\ \tf \delta_y \\ \tf \delta_u \end{bmatrix}.
\end{equation}

Thus we see that the desired map \eqref{eq:robust-response} from $\w \to (\x,\uu)$ is achieved.  Further, as $\Phix$, $\Phiu$, $\A$, $\B$ are all $\ell_\infty$-stable by assumption, it follows that the $\ell_\infty$-stability of the map from $(\tf w, \tf \delta_y, \tf \delta_u) \to (\x, \uu, \what)$ is determined by the $\ell_\infty$-stability of $I_\D$, from which the result follows.
\end{proof}

\section{Robust Performance under Model Uncertainty}
\label{sec:robust-perf}
We now use the tools developed in the previous section to identify necessary and sufficient conditions for the robust stability and robust performance of a system \eqref{eq:dynamics} subject to bounded perturbations in its $\A$ and $\B$ operators.  In particular consider the system
\begin{equation}
\tf x = \Sp(\A_0 + \DA)\x + \Sp(\B_0 + \DB)\uu + \Sp \w,
\label{eq:uncertain-dynamics}
\end{equation}
where $\A_0 = \mathrm{blkdiag}(\hat A,\hat A,\dots)$ and $\B_0 = \mathrm{blkdiag}(\hat B, \hat B,\dots)$ are memoryless LTI operators defining a nominal LTI system $x_{t+1} = \hat Ax_t + \hat Bu_t + w_t$, and $\DA$ and $\DB$ are $\ell_\infty$-stable and satisfy 
\begin{equation}
\linffnorm{[\DA,\, \DB]}\leq \eps.
\label{eq:eps-bound}
\end{equation}
We first identify SLS based necessary and sufficient conditions for robust stability, and then build upon those to formulate a robust performance problem.  

We consider the following robust control problem: find a LTI controller $\tf K : \ell^n_{\infty,e} \to \ell^p_{\infty,e}$, using only the nominal dynamics $(\hat A,\hat B)$ and uncertainty bound $\eps$, such that the dynamics \eqref{eq:uncertain-dynamics} in closed loop with the control law $\uu = \K \x$ is $\ell_\infty$-stable for all admissible uncertainty realizations $(\DA,\DB)$ satisfying \eqref{eq:eps-bound}.  To do so, we recognize that for any LTI $\{\Phixh,\Phiuh\}$ satisfying the LTI achievability constraints \eqref{eq:lti-achievable} defined by $(\hat A, \hat B)$, we then have that
\begin{multline}
\begin{bmatrix} I - \Sp\A_0 - \Sp\DA & - \Sp\B_0 - \Sp\DB \end{bmatrix}\begin{bmatrix} \Phixh \\ \Phiuh \end{bmatrix} \\= \begin{bmatrix} I - \Sp\A_0 & - \Sp\B_0 \end{bmatrix}\begin{bmatrix} \Phixh \\ \Phiuh \end{bmatrix} - \begin{bmatrix} \Sp\DA & \Sp\DB \end{bmatrix}\begin{bmatrix} \Phixh \\ \Phiuh \end{bmatrix}\\ = \Sp\left(I - \Sp\begin{bmatrix} \DA & \DB \end{bmatrix}\begin{bmatrix} \Phixh \\ \Phiuh \end{bmatrix} \right)
\end{multline}
where the final inequality follows from the assumption that $\{\Phixh,\Phiuh\}$ satisfy the LTI achievability constraints \eqref{eq:lti-achievable}, or equivalently \eqref{eq:osls-achievable}, defined in terms of the dynamics $(\hat A, \hat B)$.  Noting that
\begin{equation}
\Dh := \Sp\begin{bmatrix} \DA & \DB \end{bmatrix}\begin{bmatrix} \Phixh \\ \Phiuh \end{bmatrix},
\end{equation}
is a strictly causal $\ell_\infty$-stable operator, we conclude by Theorem \ref{thm:robust-sls} that the controller implementation \eqref{eq:realization} defined in terms of the LTI operators $\{\Phixh,\Phiuh\}$ achieves the following closed loop behavior when applied to the uncertain dynamics \eqref{eq:uncertain-dynamics}:
\begin{equation}
\begin{bmatrix}\x \\ \uu \end{bmatrix} = \begin{bmatrix} \Phixh \\ \Phiuh \end{bmatrix}(I-\Dh)^{-1}.
\label{eq:dhat-response}
\end{equation}
Further this control law is internally stabilizing if and only if $(I-\Dh)^{-1}$ is $\ell_\infty$-stable.  

Defining the controlled output signal as 
\begin{equation}
\tf z = \tf C\x + \tf D\uu,
\label{eq:z-output}
\end{equation}
for $\tf C = \mathrm{blkdiag}(C,C,\dots)$ and $\tf D = \mathrm{blkdiag}(D,D,\dots)$ user specified cost matrices,\footnote{For simplicity, we assume $\tf C$ and $\tf D$ to be memoryless and LTI, however our results are equally applicable when the controlled output is defined in terms of LTI  filters $\tf C(z)$ and $\tf D(z)$.} and consider the goal of minimizing the $\ell_\infty\to\ell_\infty$ induced gain from $\tf w \to \tf z$ of the uncertain system \eqref{eq:uncertain-dynamics}.  We can then pose the robust performance problem for a specified performance level $\gamma \geq 0$ as finding LTI operators $\{\Phixh,\Phiuh\}$ that satisfy 
\begin{equation}\label{eq:robust-perf1}
\begin{aligned}
& \linffnorm{\begin{bmatrix} \tf C & \tf D \end{bmatrix} \begin{bmatrix} \Phixh \\ \Phiuh \end{bmatrix} \left(I-\Sp\begin{bmatrix} \DA & \DB \end{bmatrix}\begin{bmatrix} \Phixh \\ \Phiuh \end{bmatrix}\right)^{-1} } \leq \gamma \\
& \begin{bmatrix} zI - \hat A   - \hat B \end{bmatrix}\begin{bmatrix} \Phixh \\ \Phiuh \end{bmatrix} = I,\ \Phixh, \Phiuh \in \frac{1}{z}\RHinf, \\
& \left(I-\Sp\begin{bmatrix} \DA & \DB \end{bmatrix}\begin{bmatrix} \Phixh \\ \Phiuh \end{bmatrix}\right)^{-1} \text{ is $\ell_\infty$-stable}
\end{aligned}
\end{equation}
for all $(\DA,\DB)$ satisfying bound \eqref{eq:eps-bound}, where we have combined equations \eqref{eq:z-output} and \eqref{eq:dhat-response} to derive the robust performance bound condition.

To lighten notation going forward, we let
\begin{equation}
\tf Q := \begin{bmatrix} \tf C & \tf D \end{bmatrix}, \ \Phih := \begin{bmatrix}\Phixh \\ \Phiuh \end{bmatrix}, \ \D := \frac{1}{\eps}\Sp\begin{bmatrix} \DA & \DB \end{bmatrix}, \ Z_{AB} := \begin{bmatrix} zI-\hat A & - \hat B\end{bmatrix}.
\end{equation}

With this notation, the robust performance problem \eqref{eq:robust-perf1} is equivalent to finding an LTI operator $\Phih$ satisfying
\begin{equation} \label{eq:robust-perf}
\begin{aligned}
& \linffnorm{\frac{1}{\gamma}\Q\Phih + \frac{1}{\gamma}\Q\Phih\D(I-(\eps\Phih)\D)^{-1}(\eps\Phih)} \leq 1 \\
& Z_{AB}\Phih = I,\ \Phih \in \frac{1}{z}\RHinf,\   (I-\Phih\D)^{-1} \text{ is $\ell_\infty$-stable}\\
\end{aligned}
\end{equation}
for all $\D$ satisfying $\linffnorm{\D}\leq 1$, where we have used that $(I-\Delta\Phih)^{-1} = I + \Delta(I-\Phih\Delta)^{-1}\Phih$ and that $(I-GH)^{-1}$ is $\ell_\infty$-stable if and only if $(I-HG)^{-1}$ is $\ell_\infty$-stable (see Proposition 1, \cite{khammash1990stability}) to recast the expression \eqref{eq:robust-perf1} in a form that matches the linear-fractional-transform (LFT) structure studied in \cite{khammash1990stability,dahleh1994control}.

We can therefore leverage the equivalence between robust stability and performance (see Theorem 5.1, \cite{khammash1990stability}) to conclude that $\Phih$ satisfies the robust performance conditions \eqref{eq:robust-perf} for all $\linffnorm{\D}\leq 1$ if and only if the augmented LTI system
\begin{equation}
\tf M = \begin{bmatrix} \frac{1}{\gamma}\Q\Phih & \frac{1}{\gamma}\Q\Phih \\ \eps\Phih & \eps\Phih \end{bmatrix}
\label{eq:bigM}
\end{equation}
is robustly stable for all structured perturbations $\tilde{\D}$ satisfying
\begin{equation}
\tilde{\D} = \mathrm{blkdiag}\left(\D_1,\D_2\right), \, \D_1, \D_2 \in \LTV, \,  \linffnorm{\D_1} \leq 1, \linffnorm{\D_2} \leq 1.
\end{equation}

The necessary and sufficient conditions for robust stability of the resulting two-block problem can be derived as a special case of Theorem 6.3 of \cite{khammash1990stability}.  The particular case of an  augmented LTI system $\tf M$ satisfying $\tf M_{11} = \tf M_{12}$ and $\tf M_{21} = \tf M_{22}$, as is the case for our problem \eqref{eq:bigM}, is addressed in Ch 8.3 of \cite{khammash1990stability}, where a similarly structured augmented system \eqref{eq:bigM} arises in the context of bounding output sensitivity in the presence of output perturbations.  The necessary and sufficient conditions specified in Theorem 6.3 of \cite{khammash1990stability} reduce to the following \emph{convex} constraints on the system response $\Phih$
\begin{equation} \label{eq:constraints}
\begin{aligned}
&Z_{AB}\Phih = I, \, \Phih \in \frac{1}{z}\RHinf, \\
& \linffnorm{\Q\Phih} + \gamma\linffnorm{\eps\Phih} < \gamma.
\end{aligned}
\end{equation}

Finally, we remark that although the constraints \eqref{eq:constraints} are in general infinite-dimensional due to the transfer matrix $\Phih$, principled finite-dimensional approximations, some of which enjoy provable sub-optimality guarantees, are available \cite{matni2017scalable,anderson2019system,dean2017sample,dean2018regret}.  Further, for the $\mathcal{L}_1$ problem considered here, the resulting optimization problem can be posed as a linear program, thus enjoying favorable computational complexity properties.
It then follows that by bisecting on $\gamma$, e.g., by using golden search, we can find a performance level $\gamma$, and corresponding system responses and controller, satisfying $\gamma \leq \gamma_\star + \epsilon$ in $O\log_2(1/\epsilon)$ iterations, for $\gamma_\star$ the smallest $\gamma$ such that the set defined by \eqref{eq:constraints} is non-empty.

\section{Large-Scale Distributed Control}
\label{sec:extensions}

\subsection{Robust Performance Guarantees for Large-Scale Distributed Control}
\label{sec:extensions}
In previous work \cite{wang2014localized,wang2016localized,wang2018separable}, it was shown that for LTI dynamical systems \eqref{eq:lti-dynamics} defined by structured (i.e., sparse) matrices $(A,B)$, imposing \emph{locality constraints} on the system responses $\{\Phix,\Phiu\}$, i.e., imposing that $\{\Phix,\Phiu\} \in \mathcal{S}$, for $\mathcal{S}$ a suitably defined structure inducing subspace constraint, leads to distributed controllers that enjoyed scalable synthesis and implementation complexity.  Although formally defining these concepts is beyond the scope of this paper, we note that such conditions can be easily enforced on the solution of the robust performance conditions \eqref{eq:constraints} by additionally imposing that $\{\Phixh,\Phiuh\}\in\mathcal{S}$.   Under suitable assumptions on the structure of the cost matrices $(C,D)$ (e.g., that $(C,D)$ are block-diagonal), the resulting problem satisfies a notion of \emph{partial separability}, c.f. \S IV of \cite{wang2018separable}, which allows for the problem to be solved at scale using tools from distributed optimization.  

We emphasize that the conditions identified in \eqref{eq:constraints} remain necessary and sufficient when additional structure is imposed on the system responses $\{\Phixh,\Phiuh\}$ so long as the dynamic perturbations $(\DA,\DB)$ remain unstructured.  In particular, Theorem 6.3 of \cite{khammash1990stability} is applicable to any augmented plant $\tf M \in \RHinf$ -- this condition holds true for the augmented plant defined in equation \eqref{eq:bigM} even when any additional constraints are imposed on $\Phih$.  To the best of our knowledge, these are the first such necessary and sufficient conditions for robust performance that are applicable to large-scale distributed systems.  An exciting direction for future work will be to explore the consequences of locality in the system responses $\{\Phixh,\Phiuh\}$ on necessary and sufficient conditions for robust performance when the perturbations $(\DA,\DB)$ are further constrained to respect the topology of the nominal system, as defined by the support of $(\hat A, \hat B)$.
%

\section{Experiments}
\label{sec:experiments}
All code needed to reproduce examples in this section can be found at \url{https://github.com/unstable-zeros/robust-sls}. We consider a scaled doubly-stochastic chain system described by the following dynamics:
\begin{equation}\label{eq:chain}
\begin{array}{rcl}
x^1_{t+1} &=& \rho\left[(1-\alpha)x^1_t + \alpha x^2_t\right] + u^1_t\\
x^i_{t+1} &=& \rho\left[\alpha x^{i-1}_t + (1-2\alpha)x^i_t + \alpha x^{i+1}_t\right] + u^i_t,\, \text{ for $i=2,\dots,N-1$,} \\
 x^N_{t+1} &=& \rho\left[\alpha x^{N-1}_t + (1-\alpha)x^N_t\right] + u^N_t
 \end{array}
\end{equation}
where the $x^i_t, u^i_t \in \R$ are the scalar state and inputs, respectively, of the subsystems, and we set the number of scalar subsystems $N=50$, the scaling factor $\rho = 0.5$, and the coupling constant $\alpha = 0.49$.  

We solve the robust performance performance problem \eqref{eq:constraints} under both centralized and localized distributed constraints with a norm bound on the uncertainty of $\epsilon = 0.55$, and cost matrices $C^\top = [I_N, 0^\top]^\top$ \& $D^\top = [0^\top, 5I_N]$.  For both settings, we impose an FIR horizon of $T=10$ when solving the robust performance problem \eqref{eq:constraints}.  Additionally, we enforce that the corresponding system responses satisfy $d$-locality constraints -- intuitively, these constraints ensure that in closed loop, the disturbance striking node $i$ only affects nodes $j$ satisfying $|j-i|\leq d$.\footnote{In the interest of clarity, we do not enforce communication delay constraints, but note that both communication delay and locality constraints can be enforced through suitable sparsity constraints on the system response variables: see \cite{anderson2019system} for details.}

By bisecting on $\gamma$, we determine that the optimal robust performance level is $\gamma = 5.57$ for both centralized and distributed controllers, where for the distributed localized controller we set the locality diameter to $d=2$.  That there is no gap between centralized and distributed is not surprising because: (i) we impose no communication delay constraints, and (ii) $\mathcal{L}_1$ optimal control leads to deadbeat optimal closed loop responses, which will consequently also be (approximately) localized in space as well.  We note that the nominal $\mathcal{L}_1$ norms of the closed loop systems for the centralized and distributed localized controllers are both $2.5$.  Comparing these to the norms achieved by the optimal $\mathcal{L}_1$ controllers (i.e., those computed by minimizing the performance cost with $\epsilon = 0$) of $1.43$ and $2.47$, respectively, we see that while there is an appreciable degradation in nominal performance in the centralized setting, there is nearly no degradation in the localized distributed setting!  We conjecture that this is due to the sparsity of the augmented plant $\tf M$ defined by the system response $\Phih$, which constrains both robust and nominal systems to behave similarly.

To empirically test this conjecture, we examine the evolution of the closed loop norm of the nominal and robust controllers to perturbations of the form $\DA = \mathrm{blkdiag}(\kappa I, \kappa I, \dots)$ and $\DB = 0$, for $\kappa \in [0,\epsilon]$, for varying locality parameters $d\in\{2,5,10\}$, where $d=10$ corresponds to the centralized setting.  The results are displayed in Fig \ref{fig:kappa}.  We show only the results for $d=2$ and $d=5$, as the result for $d=5$ and $d=10$ are indistinguishable -- as can be observed, in the ``extremely'' localized setting of $d=2$, the degrees of freedom are limited such that robust and nominal control behave similarly; in contrast, when $d=5$, the robust controller enjoys improved performance for larger values of $\kappa$, at the expense of degraded performance at lower values.  Our approach therefore allows for a principled exploration of tradeoffs between synthesis/implementation complexity (as measured by $d$), nominal performance, and robust performance for large-scale distributed systems.

\begin{figure}
\centering
\includegraphics[width=.45\columnwidth]{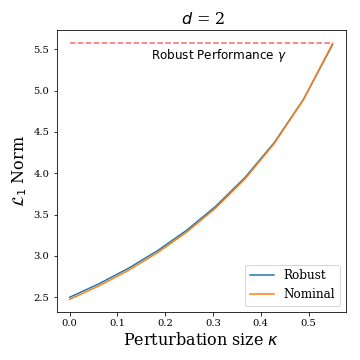}~~~~~\includegraphics[width=.45\columnwidth]{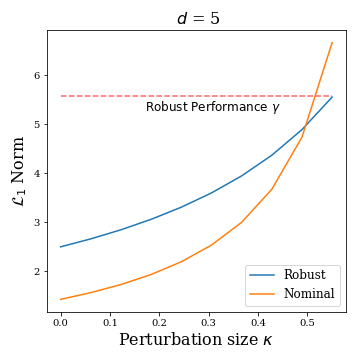}
\caption{Performance of a robust controller and a nominally optimal controller for for two decentralized chains, with disturbances  $\DA = \mathrm{blkdiag}(\kappa I, \kappa I, \dots)$. As $\kappa$ increases to $\epsilon = 0.55$, performance of the robust controller meets the robust performance bound $\gamma$ \eqref{eq:constraints}.}
\label{fig:kappa}
\end{figure}


\section{Conclusion and Future Work}
\label{sec:conclusion}
In this paper, we generalized the SLS parameterization of LTI $\ell_\infty$-stabilizing controllers, as well as a robust counterpart, to systems described by bounded and causal linear operators.  This extension, along with a simple algebraic transformation, allowed us to leverage tools from $\mathcal{L}_1$ robust control to derive necessary and sufficient conditions for the robust performance of an uncertain system in terms of convex constraints on the system response.  We argued that these conditions remain necessary and sufficient when additional structure, such as that induced by delay, sparsity, and locality constraints, are imposed on the system response, so long as the uncertain elements $(\DA,\DB)$ remained unstructured.  Further, in the case of $\mathcal{L}_1$ optimal control, the resulting robust performance criteria satisfy the partial-separability properties (assuming suitable structural constraints on the cost matrices) needed to apply the distributed synthesis techniques described in \cite{wang2018separable}, thus making our results applicable to large-scale distributed systems. 

More importantly, we believe that the results in this paper open up a wide and exciting range of future research directions, including but not limited to, deriving analogous results for $\mathcal{H}_\infty$ optimal control, revisiting the structured singular value, $\mu$-synthesis, and structured small gain theorems from a system level perspective, and perhaps most exciting, exploring the interplay between closed loop locality constraints and additional structure in the dynamic uncertainty $(\DA,\DB)$.  Further, it is of interest to see if these tighter conditions can be used to derive interpretable bounds on the degradation in performance of a robust controller as a function of the norm bound $\eps$ on the uncertainty $\linffnorm{[\DA,\DB]}$, as coarser and more conservative versions of such bounds have proved crucial in combining machine learning and robust control techniques \cite{dean2017sample,dean2018regret,dean2019safely}.

\begin{small}
\bibliographystyle{abbrvnat}  
\bibliography{references,Distributed} 
\end{small} 
\end{document}